\theoremstyle{nonumberplain}
\theoremstyle{plain}
\theoremstyle{break}
\theoremstyle{plain} \theoremheaderfont{\normalfont\bfseries}
\newtheorem{theo}{Theorem}[section]
\theoremstyle{break}
\theoremstyle{plain} \theoremheaderfont{\normalfont\bfseries}
\theoremstyle{plain} \theoremheaderfont{\normalfont\bfseries}
\theoremstyle{plain} \theoremheaderfont{\normalfont\bfseries}
\newtheorem{prop}[theo]{Proposition}
\DeclareMathOperator{\PGL}{PGL}
\DeclareMathOperator{\Aut}{Aut}
\DeclareMathOperator{\F}{\mathcal{F}}
\DeclareMathOperator{\Hom}{Hom}
\begin{document}
\title*{The cohomology of  $\mathcal{M}_{0,n}$ as an FI-module} 
\author{ Rita Jim\'enez Rolland } 
\institute{ Centro de Ciencias Matem\'aticas,\\ Universidad Nacional Aut\'onoma de M\'exico,\\ Morelia, Michoac\'an, M\'exico 58089.\\ \email{rita@matmor.unam.mx}}

\date{}

\maketitle
\begin{abstract}{ 
In this  paper we revisit the  cohomology groups of the moduli space of $n$-pointed curves of genus zero using the FI-module perspective introduced by Church-Ellenberg-Farb.  We recover known results about the corresponding representations of the symmetric group. }
\end{abstract}

\section{Introduction}\label{INTRO}

Our space of interest is $\mathcal{M}_{0,n}$, the {\it moduli space  of $n$-pointed  curves of genus zero}.  It is defined as  the quotient
$$ \mathcal{M}_{0,n}:=\F\big(\mathbb{P}^1(\mathbb{C}),n\big)/\Aut\big(\mathbb{P}^1(\mathbb{C})\big),$$
where $\F(\mathbb{P}^1(\mathbb{C}),n)$ 
 is the configuration space of $n$-ordered points in the projective line $\mathbb{P}^1(\mathbb{C})$ and the automorphism group of the projective line $\Aut\big(\mathbb{P}^1(\mathbb{C})\big)=\PGL_2(\mathbb{C})$    acts componentwise on $\F(\mathbb{P}^1(\mathbb{C}),n)$. 
For $n\geq 3$, $\mathcal{M}_{0,n}$ is a {\it fine moduli space} for 
the problem of classifying smooth $n$-pointed rational curves up to isomorphism (\cite[Proposition 1.1.2]{QUANTUM}). 


The  space $\mathcal{M}_{0,n}$  carries a natural action of the symmetric group $S_n$. The cohomology ring  of $\mathcal{M}_{0,n}$ is known and the $S_n$-representations $H^i(\mathcal{M}_{0,n};\mathbb{C})$ are  well-understood (see for example \cite{GAIFFI}, \cite{KISIN-LEHRER}, \cite{GETZLER}).

In this paper, we will consider the sequence of $S_n$-representations $H^i(\mathcal{M}_{0,n};\mathbb{C})$ as a single object, an {\it FI-module over $\mathbb{C}$}. Via this example, we introduce the basics of the FI-module theory developed by Church, Ellenberg and Farb in \cite{3AMIGOS}. We then use a well-known description of the cohomology ring of $\mathcal{M}_{0,n}$ to show in Theorem \ref{MODn} that a finite generation property  is satisfied which allows us to recover information about the $S_n$-representations in Theorem \ref{CHAR}. Specifically, we obtain a stability result concerning the decomposition  of $H^i(\mathcal{M}_{0,n};\mathbb{C})$ into irreducible $S_n$-representations, we exhibit a bound on the lengths of the representations and show that their characters have a highly constrained ``polynomial'' form.


\section{The co-FI-spaces $\mathcal{M}_{0,\bullet}$ and $\mathcal{M}_{0,\bullet+1}$}

Let {\bf FI} be the category whose objects are natural numbers {\bf n} and whose morphisms {\bf m} $\rightarrow$ {\bf n} are
injections from $[m] := \{1,\ldots,m\}$ to $[n] := \{1,\ldots,n\}$.

We are interested in the {\it co-FI-space}  $\mathcal{M}_{0,\bullet}$:  the functor from {\bf FI$^{op}$} to the category {\bf Top} of topological spaces given by given by ${\bf n} \mapsto 
\mathcal{M}_{0,n}$  that assigns to  $f:  [m] \hookrightarrow [n] $ in $\Hom_{\text{FI}} (\mathbf{m},\mathbf{n})$ the morphism $f^*: \mathcal{M}_{0,n}\rightarrow \mathcal{M}_{0,m}$ defined by $f^*\big([(p_1,p_2,\ldots, p_n)]\big)= [(p_{f(1)},p_{f(2)},\ldots, p_{f(m)})]$.
This is a particular case of the co-FI-space  $\mathcal{M}_{g,\bullet}$  considered  in \cite{JIM2} which is the functor given by ${\bf n} \mapsto 
\mathcal{M}_{g,n}$, the moduli space of Riemann surfaces of genus $g$ with $n$ marked points.

An {\it FI-module over $\mathbb{C}$} is a functor $V$ from {\bf FI} to 
 the category of $\mathbb{C}$-vector spaces $\mathbf{Vec_{\mathbb{C}}}$. Below, we denote $V(${\bf n}) by $V_n$.
Church, Ellenberg and Farb used FI-modules in \cite{3AMIGOS} to encode sequences of $S_n$-representations in single algebraic objects and with this added structure significantly strengthened the representation stability theory introduced in \cite{CHURCH_FARB}.  FI-modules translate the representation stability property  into a finite generation condition. 

 By composing the co-FI-space  $\mathcal{M}_{0,\bullet}$ with the cohomology functor $H^i(-;\mathbb{C})$, we obtain the FI-module $H^i(\mathcal{M}_{0,\bullet}) := H^i(\mathcal{M}_{0,\bullet}; \mathbb{C})$. 
We can also consider the graded version
$H^*(\mathcal{M}_{0,\bullet}):=H^*(\mathcal{M}_{0,\bullet}; \mathbb{C})$, we call this a {\it graded FI-module} over $\mathbb{C}$. 

The co-FI-space $\F(\mathbb{C},\bullet)$ given by ${\bf n} \mapsto \F(\mathbb{C},n)$, the configuration space of $n$ ordered points in $\mathbb{C}$, and the corresponding FI-modules $H^i\big(\F(\mathbb{C},\bullet)\big)$  are key in our discussion below. In the expository paper \cite{FARB}, representation stability and FI-modules  are motivated mainly through this  example. A formal discussion of FI-modules and their properties is given in \cite{3AMIGOS}. In \cite{4AMIGOS} the theory of  FI-modules is extended to modules over arbitrary Noetherian rings.  

 \medskip

\noindent {\bf The ``shifted'' co-FI-space $\mathcal{M}_{0,\bullet+1}$}. Consider the functor $\Xi_1$ from  $\mathbf{FI}$ to $\mathbf{FI}$ given by $[n]\mapsto [n]\sqcup\{0\}$. Notice that this functor induces the inclusion of groups $$J_n:S_n=\text{End}_{\text{FI}}[\mathbf{n}]\hookrightarrow \text{End}_{\text{FI}}[\mathbf{n+1}]=S_{n+1}$$ that sends the generator $(i\ i+1)$ of $S_n$ to the transposition $(i+1\ i+2)$ of $S_{n+1}$. 
In our discussion below we are interested in the ``shifted'' co-FI-space $\mathcal{M}_{0,\bullet+1}$ obtained by $\mathcal{M}_{0,\bullet}\circ\Xi_1$. Notice that  this co-FI-space is given by ${\bf n} \mapsto 
\mathcal{M}_{0,n+1}$. In the notation from \cite[Section 2]{4AMIGOS}, this means that the FI-module $$H^i(\mathcal{M}_{0,\bullet+1})=S_{+1}\big(H^i(\mathcal{M}_{0,\bullet})\big),$$  where
 $S_{+1}:\text{\bf FI-Mod}\rightarrow\text{\bf FI-Mod}$ is the shift functor given by $S_{+1}:=-\circ\Xi_1$.
The functor $S_{+1}$  performs the restriction, from an $S_{n+1}$-representation to an $S_n$-representation, consistently for all $n$ so that the resulting sequence of representations still has the structure of an FI-module. Comparing the $S_n$-representation $H^i(\mathcal{M}_{0,\bullet+1})_n$ with the $S_{n+1}$-representation $H^i(\mathcal{M}_{0,\bullet})_{n+1}=H^i(\mathcal{M}_{0,n+1})$ we have an isomorphism of $S_n$-representations

$$H^i(\mathcal{M}_{0,\bullet+1})_n\cong\text{Res}_{S_n}^{S_{n+1}} H^i(\mathcal{M}_{0,n+1}).$$

\section{Relation with the configuration space}
We will understand the cohomology ring of  $\mathcal{M}_{0,n}$  through its relation with  the configuration space $\F(\mathbb{C},n)$  of $n$ ordered points in $\mathbb{C}$.

In our descriptions below, we consider $\mathbb{P}^1(\mathbb{C})$ with coordinates $[t:z]$ and the embedding $\mathbb{C}\hookrightarrow \mathbb{P}^1(\mathbb{C})$, given by $z\mapsto [1:z]$ and let $[0:1]=\infty$. We use the brackets to indicate ``equivalence class of''.
Since there is a unique element in $\PGL_2(\mathbb{C})$ that takes any three distinct points in $\mathbb{P}^1(\mathbb{C})$ to $\big([0:1], [1:0], [1:1]\big)=(\infty, 0, 1)$,  every element in $\mathcal{M}_{0,n+1}$ can be written canonically as
$\big[\big([0:1], [1:0], [1:1], [t_1:z_1], \cdots, [t_{n-2}:z_{n-2}]\big)\big]$.  Hence, $\mathcal{M}_{0,4}\cong\mathbb{P}^1(\mathbb{C})\backslash\{\infty, 0,1\}$
and 
$\mathcal{M}_{0,n+1}\cong\F\big(\mathcal{M}_{0,4},n-2\big)$.

Define the map $\psi: \F(\mathbb{C},n)\longrightarrow \mathcal{M}_{0,n+1}$ by 
$$\psi (z_1,z_2,\ldots,z_n)=\Big[\Big(\infty, 0, 1, \frac{z_3-z_1}{z_2-z_1},\frac{z_4-z_1}{z_2-z_1},\cdots,\frac{z_n-z_1}{z_2-z_1}\Big)\Big].$$

 The symmetric group $S_n$ acts on $\F(\mathbb{C},n)$ by permuting the coordinates. Let $(1\ 2), (2\ 3),\ldots (n-1\ n)$ be transpositions generating $S_n$ and notice that
$$\begin{matrix}
&\psi \big((1\ 2)\cdot (z_1,z_2,\ldots,z_n)\big)=\Big[\Big(\infty, 0, 1, \frac{z_3-z_2}{z_1-z_2},\frac{z_4-z_2}{z_1-z_2},\cdots,\frac{z_n-z_2}{z_1-z_2}\Big)\Big]\\
&=\Bigg[\Big[\begin{matrix} 1&0\\1&-1\end{matrix}\Big]\cdot\Big([0:1], [1:0], [1:1], [1: \frac{z_3-z_2}{z_1-z_2}],[1:\frac{z_4-z_2}{z_1-z_2}],\cdots,[1:\frac{z_n-z_2}{z_1-z_2}]\Big)\Bigg]\\
&=\Bigg[\Big([0:1], [1:1], [1:0], [1: \frac{z_3-z_1}{z_2-z_1}],[1:\frac{z_4-z_1}{z_2-z_1}],\cdots,[1:\frac{z_n-z_1}{z_2-z_1}]\Big)\Bigg]\\
&=(2\ 3)\cdot \psi(z_1,z_2,\ldots,z_n)

\end{matrix}$$
and in general 
$$\psi \big((i\ i+1)\cdot (z_1,z_2,\ldots,z_n)\big)=(i+1\ i+2)\cdot \psi(z_1,z_2,\ldots,z_n)\text{\ \  for\ \ } i\geq 2.$$

Therefore the map $\psi: \F(\mathbb{C},n)\longrightarrow \mathcal{M}_{0,n+1}$ is equivariant with respect to the inclusion $J_n:S_n \hookrightarrow S_{n+1}$.  In other words, 
$\psi: \F(\mathbb{C},\bullet)\longrightarrow \mathcal{M}_{0,\bullet+1}$ is a map of co-FI-spaces.\medskip

\noindent{\bf Relation with the Coxeter arrangement of type $A_{n-1}$.}
 The {\it complement of the complexified Coxeter arrangement of hyperplanes type $A_{n-1}$} is $M(\mathcal{A}_{n-1})$,  the image of $\F(\mathbb{C},n)$ under the quotient map $\mathbb{C}^n\rightarrow\mathbb{C}^n/N$, where $N=\{(z_1,\ldots,z_n)\in\mathbb{C}^n: z_i=z_j \text{ for  }1\leq i,j\leq n\}$.

As explained in \cite{GAIFFI}, it turns out  that  the moduli space $\mathcal{M}_{0,n+1}$ is also in bijective correspondence with the projective arrangement
$$M(d\mathcal{A}_{n-1}):=\pi\big(M(\mathcal{A}_{n-1})\big)\cong M(\mathcal{A}_{n-1})/\mathbb{C}^*,$$
where $\pi: \mathbb{C}^{n-1}\backslash\{0\}\rightarrow \mathbb{P}^{n-2}(\mathbb{C})$ is the Hopf bundle projection, which takes $z\in\mathbb{C}^{n-1}\backslash\{0\}$ to $\lambda z$ for $\lambda\in\mathbb{C}^*$.
Moreover,  the map $\psi$ factors through  $M(\mathcal{A}_{n-1})$ and  $M(d\mathcal{A}_{n-1})$.
$$
\xymatrix{ \F(\mathbb{C},n)\ar[r]\ar[ddr]_{\psi} & M(\mathcal{A}_{n-1})\ar[d]^{\pi}\\ & M(d \mathcal{A}_{n-1})\ar[d]^{\cong} \\
&\mathcal{M}_{0,n+1}}
$$

In \cite{GAIFFI}, Gaiffi  extends the natural $S_n$-action on $H^*\big(M(\mathcal{A}_{n-1});\mathbb{C}\big)$ to an $S_{n+1}$-action using  the vertical map in the diagram above and the natural $S_{n+1}$-action on $H^*\big(\mathcal{M}_{0,n+1};\mathbb{C}\big)$.
\medskip


\noindent{\bf The cohomology rings.}
As proved in \cite[Prop. 2.2 \& Theorem 3.2]{GAIFFI} , the map $\psi$ allows us to relate the cohomology rings of $\mathcal{M}_{0,n+1}$ and $\F(\mathbb{C};n)$.  See also \cite[Cor. 3.1] {GETZLER}.

\begin{prop}\label{ISO} The maps $\psi$ induces an isomorphism of cohomology rings
$$H^*\big(\F(\mathbb{C},n);\mathbb{C}\big)\cong H^*\big(\mathcal{M}_{0,n+1};\mathbb{C}\big)\otimes H^*(\mathbb{C}^*;\mathbb{C})$$
as $S_n$-modules. The symmetric group $S_n$ acts trivially on $H^*(\mathbb{C}^*;\mathbb{C})$ 
and acts on $H^*\big(\mathcal{M}_{0,n+1};\mathbb{C}\big)$ through the inclusion $J_n:S_n \hookrightarrow S_{n+1}$ that sends the generator $(i\ i+1)$ of $S_n$ to the transposition $(i+1\ i+2)$ of $S_{n+1}$.
\end{prop}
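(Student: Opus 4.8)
The plan is to realize $\psi$, up to $S_n$-equivariant homotopy, as a principal $\mathbb{C}^*$-bundle and then run the Leray--Hirsch theorem with a single $S_n$-invariant trivializing class. Using the factorization above, the quotient map $\F(\mathbb{C},n)\to M(\mathcal{A}_{n-1})$ by the translation subgroup $N\cong\mathbb{C}$ has contractible fibers, so it is an $S_n$-equivariant homotopy equivalence and induces a ring isomorphism $H^*(M(\mathcal{A}_{n-1}))\cong H^*(\F(\mathbb{C},n))$. The Hopf map $\pi\colon M(\mathcal{A}_{n-1})\to M(d\mathcal{A}_{n-1})\cong\mathcal{M}_{0,n+1}$ is a principal $\mathbb{C}^*$-bundle for the scaling action, with $S_n$ acting compatibly and inducing on the base exactly the $J_n$-twisted action. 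Since $\psi^*$ is a ring map which, as shown above, is $S_n$-equivariant with respect to the $J_n$-twist, it remains to produce one degree-one class on $M(\mathcal{A}_{n-1})$ that restricts to a generator of the fiber cohomology $H^1(\mathbb{C}^*)$, is $S_n$-invariant, and squares to zero.

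The class will come from the discriminant $\Delta=\prod_{i<j}(z_i-z_j)$, a nowhere-vanishing holomorphic function on $\F(\mathbb{C},n)$ which, being translation-invariant, already descends to $M(\mathcal{A}_{n-1})$. I would set
$$ e:=\frac{1}{\binom{n}{2}}\Big[\frac{1}{2\pi i}\,d\log\Delta\Big]\in H^1\big(M(\mathcal{A}_{n-1});\mathbb{C}\big) $$
and verify three properties. First, $e$ is represented by a single closed $1$-form $\eta$, hence $e^2=[\eta\wedge\eta]=0$ for formal reasons. Second, a permutation of the $z_i$ sends $\Delta$ to $\pm\Delta$, and $d\log(\pm\Delta)=d\log\Delta$, so $e$ is $S_n$-invariant. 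Third, along a fiber of $\pi$ (a scaling orbit $\bar z\mapsto\lambda\bar z$) one has $\Delta\mapsto\lambda^{\binom{n}{2}}\Delta$, so $d\log\Delta$ restricts to $\binom{n}{2}\,d\log\lambda$; thus $e$ restricts to the standard generator of $H^1(\mathbb{C}^*;\mathbb{C})$. The normalization by $\binom{n}{2}$ is precisely why one argues on cohomology rather than exhibiting an explicit homeomorphism onto a product: the only manifestly symmetric choice, $\Delta$, has scaling-degree $\binom{n}{2}\neq 1$.

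With $1$ and $e$ restricting to the basis $\{1,x\}$ of $H^*(\mathbb{C}^*;\mathbb{C})=\Lambda[x]$ (where $|x|=1$), the Leray--Hirsch theorem yields an isomorphism of $H^*(\mathcal{M}_{0,n+1})$-modules
$$ H^*(\mathcal{M}_{0,n+1};\mathbb{C})\otimes H^*(\mathbb{C}^*;\mathbb{C})\xrightarrow{\ \cong\ }H^*\big(\F(\mathbb{C},n);\mathbb{C}\big),\qquad a\otimes 1\mapsto\psi^*a,\quad a\otimes x\mapsto(\psi^*a)\cup e. $$
Because $\psi^*$ is a ring homomorphism and $e^2=0$ matches the relation $x^2=0$ in $\Lambda[x]$, this module isomorphism is automatically a ring isomorphism onto the tensor-product ring. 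It is $S_n$-equivariant: $\psi^*$ intertwines the $J_n$-twisted action on $H^*(\mathcal{M}_{0,n+1})$ with the permutation action on $H^*(\F(\mathbb{C},n))$, the class $e$ is $S_n$-invariant, and $S_n$ acts trivially on $H^*(\mathbb{C}^*)$.

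The crux, and the only real obstacle, is to meet all three requirements on the trivializing class at once. The obvious degree-one trivializations $d\log(z_i-z_j)$ each square to zero but are not symmetric, while the symmetric combination $\sum_{i<j}d\log(z_i-z_j)$ is not obviously square-zero from the Orlik--Solomon relations alone. Recognizing this symmetric sum as the single form $d\log\Delta$ dissolves the tension: a $1$-form squares to zero, so the Leray--Hirsch splitting is simultaneously multiplicative and $S_n$-equivariant.
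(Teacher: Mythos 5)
Your argument is correct, but it is worth noting that the paper itself does not prove Proposition \ref{ISO} at all: it is quoted from Gaiffi \cite{GAIFFI} (Prop.~2.2 and Theorem~3.2), so you are supplying a proof where the paper supplies a citation. The route suggested by the paper's own algebraic setup (Theorems \ref{ARNPRES} and \ref{MODULIPRES}) would be to split the Orlik--Solomon algebra as $\mathcal{R}_n\cong\mathcal{S}_n\otimes\Lambda[\omega_{1,2}]$, where $\mathcal{S}_n$ is the subalgebra generated by the $\theta_{i,j}=\omega_{i,j}-\omega_{1,2}$; that splitting is transparent as a ring isomorphism but uses the non-symmetric element $\omega_{1,2}$, so the $S_n$-module statement requires a separate verification that the twisted action on the $\theta_{i,j}$ matches the permutation action on the $\omega_{i,j}$ modulo the second factor. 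Your topological argument goes the other way around: by running Leray--Hirsch on the tautological $\mathbb{C}^*$-bundle $M(\mathcal{A}_{n-1})\to M(d\mathcal{A}_{n-1})$ with the manifestly symmetric Euler form $e=\tbinom{n}{2}^{-1}\bigl[\tfrac{1}{2\pi i}\,d\log\Delta\bigr]=\tbinom{n}{2}^{-1}\sum_{i<j}\omega_{i,j}$, you get the $S_n$-equivariance for free and only have to check multiplicativity, which follows from $e^2=0$ since $e$ is represented by a single $1$-form. All the individual steps check out: the translation quotient $\F(\mathbb{C},n)\to M(\mathcal{A}_{n-1})$ admits the $S_n$-invariant section $\{\sum z_i=0\}$, hence is an equivariant homotopy equivalence; $M(\mathcal{A}_{n-1})$ is scaling-invariant because the arrangement is central, so $\pi$ restricts to a genuine principal $\mathbb{C}^*$-bundle over $M(d\mathcal{A}_{n-1})$; the fiberwise degree of $\Delta$ is $\binom{n}{2}$, so the normalization makes $e$ restrict to the standard generator of $H^1(\mathbb{C}^*;\mathbb{C})$ (the normalization is harmless since you work over $\mathbb{C}$); and the upgrade of the Leray--Hirsch module isomorphism to a ring isomorphism is legitimate precisely because the lift $e$ of the exterior generator $x$ satisfies the same relation $e^2=0$. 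The one place where you lean on the paper rather than on your own construction is the identification of the induced action on the base with the $J_n$-twisted $S_{n+1}$-action; that is exactly the displayed computation $\psi\bigl((i\ i{+}1)\cdot z\bigr)=(i{+}1\ i{+}2)\cdot\psi(z)$ preceding the proposition, so no gap results.
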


This means that the map of co-FI-spaces 
$\psi: \F(\mathbb{C},\bullet)\longrightarrow \mathcal{M}_{0,\bullet+1}$
induces an isomorphism of graded FI-modules
$$H^*\big(\F(\mathbb{C},\bullet)\big)\cong H^*\big(\mathcal{M}_{0,\bullet+1}\big)\otimes  H^*(\mathbb{C}^*),$$
where  $H^*(\mathbb{C}^*)$ 
is the trivial graded FI-module given by $\mathbf{n}\mapsto H^*(\mathbb{C}^*;\mathbb{C})$.

Furthermore, Arnol'd obtained a presentation of the  cohomology ring of $\F(\mathbb{C},n)$ in  (\cite{ARNOLD}).
 
\begin{theo}\label{ARNPRES} The cohomology ring $ H^*\big(\F(\mathbb{C},n);\mathbb{C}\big)$ is isomorphic to  the $\mathbb{C}$-algebra $\mathcal{R}_n$ generated by $1$ and forms $\omega_{i,j} :=\frac{d\log(z_j-z_i)}{2\pi \text{i}}$ , $1\leq i\neq j\leq n$, with relations $\omega_{i,j}=\omega_{j,i}$,\ \ $\omega_{i,j}\omega_{k,l}=-\omega_{k,l}\omega_{i,j}$ and $\omega_{i,j}\omega_{j,k}+\omega_{j,k}\omega_{k,i}+\omega_{k,i}\omega_{i,j}=0$. The action $S_n$ is given by $\sigma\cdot\omega_{i,j}=\omega_{\sigma(i)\sigma(j)}$ for $\sigma\in S_n$.

\end{theo}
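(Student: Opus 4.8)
The plan is to build an algebra homomorphism $\Phi\colon\mathcal{R}_n\to H^*\big(\F(\mathbb{C},n);\mathbb{C}\big)$ sending each generator $\omega_{i,j}$ to the de Rham class of the form of the same name, and then to prove it is an isomorphism by squeezing a spanning set of $\mathcal{R}_n$ between an upper bound coming from the defining relations and the exact Betti numbers of $\F(\mathbb{C},n)$. First I would check that $\Phi$ is well defined, i.e. that the three families of relations hold among the actual differential forms. Symmetry $\omega_{i,j}=\omega_{j,i}$ holds because $d\log(z_i-z_j)$ and $d\log(z_j-z_i)$ differ by the differential of the constant $\log(-1)$; graded-commutativity $\omega_{i,j}\omega_{k,l}=-\omega_{k,l}\omega_{i,j}$ is automatic for odd-degree classes in the de Rham algebra; and the Arnol'd relation is a short $2$-form computation on the three coordinates $z_i,z_j,z_k$, using $d(z_j-z_i)+d(z_k-z_j)+d(z_i-z_k)=0$. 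The $S_n$-equivariance $\sigma\cdot\omega_{i,j}=\omega_{\sigma(i),\sigma(j)}$ is immediate, since $S_n$ permutes the coordinates $z_1,\dots,z_n$.

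Next I would extract an explicit spanning set of $\mathcal{R}_n$ and count it. After using symmetry to write every factor as $\omega_{i,j}$ with $i<j$, the Arnol'd relation in the form $\omega_{i,k}\,\omega_{j,k}=\omega_{i,j}\,\omega_{j,k}-\omega_{i,j}\,\omega_{i,k}$ (for $i<j<k$) lets one eliminate any product of two factors sharing the same larger index. Iterating, $\mathcal{R}_n$ is spanned by the monomials $\omega_{i_1,j_1}\cdots\omega_{i_k,j_k}$ with $j_1<\dots<j_k$ and $i_\ell<j_\ell$ for each $\ell$. For each value $j\in\{2,\dots,n\}$ one either omits a factor with larger index $j$ or chooses one of the $j-1$ admissible smaller indices, so the generating function of these monomials is $\prod_{j=2}^{n}\big(1+(j-1)t\big)=\prod_{k=1}^{n-1}(1+kt)$. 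This bounds $\dim_{\mathbb{C}}\mathcal{R}_n^k$ above by the coefficient of $t^k$ in this product.

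To match this bound, I would compute $H^*\big(\F(\mathbb{C},n);\mathbb{C}\big)$ and prove surjectivity of $\Phi$ simultaneously, by induction on $n$ using the Fadell--Neuwirth fibration $\F(\mathbb{C},n)\to\F(\mathbb{C},n-1)$ that forgets the last point. Its fiber is $\mathbb{C}$ minus $n-1$ points, homotopy equivalent to a wedge of $n-1$ circles, and the classes $\omega_{1,n},\dots,\omega_{n-1,n}$ restrict on the fiber to the forms encircling the punctures, hence to a basis of its $H^1$. By Leray--Hirsch the spectral sequence degenerates, $H^*\big(\F(\mathbb{C},n)\big)$ is free over $H^*\big(\F(\mathbb{C},n-1)\big)$ with basis $1,\omega_{1,n},\dots,\omega_{n-1,n}$, and the Poincar\'e polynomial is multiplied by $\big(1+(n-1)t\big)$ at each stage, giving $\prod_{k=1}^{n-1}(1+kt)$. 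Inductively this also shows the classes $\omega_{i,j}$ generate the whole ring, so $\Phi$ is surjective. Comparing with the upper bound of the previous paragraph forces equality of dimensions in every degree, and a surjection between finite-dimensional spaces of equal dimension is an isomorphism; hence $\Phi$ is an isomorphism.

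The main obstacle is the coordination of the last two steps: one must choose the normal-form monomials so that the Arnol'd relation genuinely reduces every product to a combination of them, and independently verify that these same classes remain linearly independent as a Leray--Hirsch basis. It is precisely the agreement of the combinatorial upper bound with the topological Betti numbers that certifies there are no further relations and pins down the presentation.
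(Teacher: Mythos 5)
The paper does not prove this statement at all: it is Arnol'd's classical presentation of $H^*\big(\F(\mathbb{C},n);\mathbb{C}\big)$, quoted directly from \cite{ARNOLD}, so there is no in-paper argument to compare against. Your proposal is the standard proof of that classical result, and it is correct. Checking that the three relations hold among the actual logarithmic forms (the Arnol'd relation reduces to $\frac{1}{ab}-\frac{1}{b(a+b)}-\frac{1}{a(a+b)}=0$ with $a=z_j-z_i$, $b=z_k-z_j$), straightening monomials via $\omega_{i,k}\omega_{j,k}=\omega_{i,j}\omega_{j,k}-\omega_{i,j}\omega_{i,k}$ for $i<j<k$ into products with distinct, increasing larger indices, the resulting upper bound $\prod_{k=1}^{n-1}(1+kt)$ on the Poincar\'e series of $\mathcal{R}_n$, and the matching computation of $H^*\big(\F(\mathbb{C},n)\big)$ by Leray--Hirsch applied to the Fadell--Neuwirth fibration are exactly the ingredients of Arnol'd's (and, more generally, Orlik--Solomon's) argument; the dimension count then forces the surjection $\Phi$ to be an isomorphism. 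The one step you leave informal is the one you flag yourself, namely that the rewriting process terminates in the span of the normal-form monomials. This is routine: two factors sharing a larger index $k$ can be brought adjacent up to sign, a repeated factor squares to zero, and each application of the relation replaces the multiset of larger indices $\{\dots,k,k,\dots\}$ by $\{\dots,j,k,\dots\}$ with $j<k$, which strictly decreases a well-founded ordering. With that observation supplied, your proof is complete.
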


As a consequence of the isomorphism in Proposition \ref{ISO}, we also have a concrete description of the cohomology ring $H^*\big(\mathcal{M}_{0,n+1};\mathbb{C}\big)$.  We refer the reader to \cite[Cor. 3.1] {GETZLER}, \cite[Theorem 3.4]{GAIFFI} and references therein.

\begin{theo}\label{MODULIPRES} The cohomology ring $H^*\big(\mathcal{M}_{0,n+1};\mathbb{C}\big)$
is isomorphic to the subalgebra of $\mathcal{R}_n$ generated by $1$ and elements $\theta_{i,j}:=\omega_{i,j} -\omega_{12}$  for $\{i,j\}\neq\{1,2\}$. The $S_n$-action given by $\sigma\cdot\theta_{i,j}=\theta_{\sigma(i),\sigma(j)} -\theta_{\sigma(1),\sigma(2)}$, for $\sigma\in S_n$.\end{theo}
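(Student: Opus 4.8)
The plan is to read the statement off Proposition~\ref{ISO} and Arnol'd's presentation (Theorem~\ref{ARNPRES}) rather than computing $H^*(\mathcal{M}_{0,n+1};\mathbb{C})$ from scratch. Under the isomorphism of Proposition~\ref{ISO} the induced map $\psi^*\colon H^*(\mathcal{M}_{0,n+1};\mathbb{C})\to\mathcal{R}_n$ is the inclusion of the tensor factor $a\mapsto a\otimes 1$, i.e.\ the pullback along the scaling $\mathbb{C}^*$-bundle $M(\mathcal{A}_{n-1})\to\mathcal{M}_{0,n+1}$ underlying that proposition. In particular $\psi^*$ is an injective ring homomorphism, so $H^*(\mathcal{M}_{0,n+1};\mathbb{C})\cong\mathrm{im}\,\psi^*$ as rings and the task reduces to identifying $\mathrm{im}\,\psi^*$ with the subalgebra $A\subseteq\mathcal{R}_n$ generated by $1$ and the $\theta_{i,j}$, $\{i,j\}\neq\{1,2\}$.

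First I would compute $\psi^*$ in degree one. In the coordinates $w_k=\tfrac{z_k-z_1}{z_2-z_1}$ determined by $\psi$, the canonical identification $\mathcal{M}_{0,n+1}\cong\F(\mathbb{C}\setminus\{0,1\},\,n-2)$ realizes $\mathcal{M}_{0,n+1}$ as the complement of the affine arrangement with hyperplanes $w_k=0$, $w_k=1$ and $w_k=w_l$, whose $d\log$-classes form a basis of $H^1(\mathcal{M}_{0,n+1};\mathbb{C})$. From $\omega_{i,j}=\tfrac{d\log(z_j-z_i)}{2\pi\mathrm{i}}$ together with $w_k=\tfrac{z_k-z_1}{z_2-z_1}$, $w_k-1=\tfrac{z_k-z_2}{z_2-z_1}$ and $w_k-w_l=\tfrac{z_k-z_l}{z_2-z_1}$, these classes pull back to $\omega_{1,k}-\omega_{12}$, $\omega_{2,k}-\omega_{12}$ and $\omega_{k,l}-\omega_{12}$ respectively; that is, $\psi^*$ carries this basis bijectively onto $\{\theta_{i,j}:\{i,j\}\neq\{1,2\}\}$ (the $\theta_{i,j}$ being linearly independent since the $\omega_{i,j}$ are, by Theorem~\ref{ARNPRES}). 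Hence $\psi^*\big(H^1(\mathcal{M}_{0,n+1};\mathbb{C})\big)=A^1$, and in particular $A\subseteq\mathrm{im}\,\psi^*$.

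For the reverse inclusion $\mathrm{im}\,\psi^*\subseteq A$ I would invoke degree-one generation: being the complement of a hyperplane arrangement, $\mathcal{M}_{0,n+1}$ has Orlik--Solomon cohomology, so $H^*(\mathcal{M}_{0,n+1};\mathbb{C})$ is generated as a $\mathbb{C}$-algebra by $H^1$. Applying the ring homomorphism $\psi^*$, its image is then generated by $\psi^*(H^1)=A^1$, whence $\mathrm{im}\,\psi^*\subseteq A$. Combined with the previous step this gives $\mathrm{im}\,\psi^*=A$ and the asserted ring isomorphism.

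Finally, the $S_n$-action is immediate from equivariance. By Proposition~\ref{ISO} the isomorphism is $S_n$-equivariant with $S_n$ acting on $H^*(\mathcal{M}_{0,n+1};\mathbb{C})$ through $J_n$, so the action transported to $A=\mathrm{im}\,\psi^*$ is the restriction of Arnol'd's action $\sigma\cdot\omega_{i,j}=\omega_{\sigma(i)\sigma(j)}$; thus $\sigma\cdot\theta_{i,j}=\omega_{\sigma(i)\sigma(j)}-\omega_{\sigma(1)\sigma(2)}=\theta_{\sigma(i)\sigma(j)}-\theta_{\sigma(1)\sigma(2)}$, which also shows $A$ is $S_n$-stable. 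I expect the only genuine obstacle to be the reverse inclusion: the $d\log$ computation and the $S_n$-formula are routine or formal, whereas pinning down $\mathrm{im}\,\psi^*$ exactly requires the structural input that $H^*(\mathcal{M}_{0,n+1};\mathbb{C})$ is generated in degree one---equivalently, that the $\theta_{i,j}$ already exhaust the cohomology rather than merely embedding into it.
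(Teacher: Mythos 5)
Your argument is correct, and it follows exactly the route the paper indicates: the paper gives no proof of Theorem~\ref{MODULIPRES}, presenting it as a consequence of Proposition~\ref{ISO} with citations to Getzler and Gaiffi, and your $d\log$ computation of $\psi^*$ on degree one plus the Orlik--Solomon fact that arrangement complements have cohomology generated in degree one is precisely the content of those references. The only cosmetic point is the implicit convention $\theta_{1,2}=0$, needed so that the formula $\sigma\cdot\theta_{i,j}=\theta_{\sigma(i),\sigma(j)}-\theta_{\sigma(1),\sigma(2)}$ makes sense when $\{\sigma(i),\sigma(j)\}$ or $\{\sigma(1),\sigma(2)\}$ equals $\{1,2\}$; this is shared with the statement itself and is not a gap in your proof.
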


\section{Finite generation}

We can  use the explicit presentations in Theorems \ref{ARNPRES} and \ref{MODULIPRES} to understand the FI-modules $H^i\big(\F(\mathbb{C},\bullet)\big)$ and $H^i(\mathcal{M}_{0,\bullet+1})$.\medskip

An FI-module $V$ over $\mathbb{C}$  is said to be {\it finitely generated in degree $\leq m$} if there exist $v_1,\ldots,v_s$,  with each $v_i\in V_{n_i}$ and $n_i\leq m$, such that $V$ is the minimal sub-FI-module of $V$ containing $v_1,\ldots,v_s$.  Finitely generated FI-modules have strong closure properties:  
extensions and quotients of finitely generated FI-modules are still finitely generated and finite generation is preserved when taking sub-FI-modules.

Notice that from Theorem \ref{ARNPRES} it follows that $H^1\big(\F(\mathbb{C},n);\mathbb{C}\big)$ is generated as an $S_n$-module by the class $\omega_{1,2}$. Therefore,  the FI-module $H^1\big(\F(\mathbb{C},\bullet)\big)$ is finitely generated in degree $2$ by the class $\omega_{1,2}$ in $H^1\big(\F(2;\mathbb{C})\big)$.  

Similarly, from Theorem \ref{MODULIPRES} we know that 
$H^1(\mathcal{M}_{0,n+1})$ is generated by the $\theta_{i,j}$ classes and notice that 
$\theta_{1,j}=(j\ 3)\cdot\theta_{1,3}$ for $j\neq\{1,2\}$; $\theta_{2,j}=(j\ 3)\cdot\theta_{2,3}$ for $j\neq\{1,2\}$ and $\theta_{i,j}=(i\ 3)(j\ 4)\cdot\theta_{3,4}$ for $\{i,j\}\neq\{1,2\}$. 
Therefore,  $H^1(\mathcal{M}_{0,n+1})$ is generated by  $\theta_{1,3}$, $\theta_{2,3}$ and $\theta_{3,4}$ as an $S_n$-module. 
This means that the FI-module $H^1(\mathcal{M}_{0,\bullet+1})$ is finitely generated by the classes $\theta_{1,3}$, $\theta_{2,3}$ and $\theta_{3,4}$ in $H^1(\mathcal{M}_{0,\bullet+1})_4$, hence in degree $4$. \medskip

An FI-module $V$ encodes the information of the sequence $V_n$ of $S_n$-representations. Finite generation of $V$ puts strong constraints on the decomposition of each $V_n$ into irreducible representations  and its character.\medskip

\noindent {\bf Notation for representations of $S_n$. } The irreducible representations of $S_n$ over  $\mathbb{C}$ are classified by partitions of $n$. A partition $\lambda$ of $n$ is a set of positive integers $\lambda_1\geq\cdots\geq\lambda_l >0$ where $l\in\mathbb{Z}$ and $\lambda_1+\cdots+\lambda_l=n$.  We  write $|\lambda|=n$. The corresponding irreducible $S_n$-representation will be denoted by $V_{\lambda}$. Every  $V_{\lambda}$ is defined over $\mathbb{C}$ and any $S_n$-representation decomposes over $\mathbb{C}$ into a direct sum of irreducibles.

If $\lambda$ is any  partition of $m$, i.e. $|\lambda|=m$, then for any $n\geq |\lambda|+ \lambda_1$ the \textit{padded partition} $\lambda[n]$ of $n$ is given by $n-|\lambda|\geq\lambda_1\geq\cdots\geq\lambda_l>0$. Keeping the notation from \cite{CHURCH_FARB},  we set $V(\lambda)_n=V_{\lambda[n]}$  for any $n\geq |\lambda|+\lambda_1$. Every irreducible $S_n$-representation is of the form $V(\lambda)_n$ for a unique partition $\lambda$.  
We define the {\it length} of an irreducible representation of $S_n$ to be the
number of parts in the corresponding partition of $n$. The trivial representation  has length $1$, and the alternating representation has length $n$. We define the {\it length $\ell(V )$} of a finite dimensional representation $V$ of $S_n$ to be the maximum of the lengths of the irreducible constituents. 
\bigskip

We say that an FI-module  $V$ over $\mathbb{C}$ has {\it weight $\leq d$}  if for every $n\geq 0$ and every irreducible constituent $V(\lambda)_n$ we have $|\lambda|\leq d$. The degree of generation of an FI-module $V$ gives an upper bound for the weight (\cite[Prop. 3.2.5]{3AMIGOS}). The weight of an FI-module is closed under subquotients and extensions. Moreover, if a finitely generated FI-module $V$ has weight $\leq d$, by definition, $\ell (V_n)\leq d+1$ for all $n$ and the alternating representation cannot not appear in the decomposition into irreducibles of $V_n$ once $n>d+1$.

Notice that the FI-module $H^1\big(\F(\mathbb{C}, \bullet)\big)$ has weight at most $2$  and so does $H^1(\mathcal{M}_{0,\bullet+1})$, since it is a sub-FI-module of $H^1\big(\F(\mathbb{C},\bullet)\big)$.\medskip
 
An FI-module $V$ has {\it stability degree} $\leq N$, if for every $a\geq 0$ and $n\geq N+a$ the map of coinvariants
\begin{equation}\label{MAPI}
(I_n)_*:(V_n)_{S_{n-a}}\rightarrow (V_{n+1})_{S_{(n+1)-a}}
\end{equation}
induced by the standard inclusion $I_n:\{1,\ldots n\}\hookrightarrow\{1,\ldots, n, n+1\}$, is an isomorphism of $S_a$-modules (see \cite[Definition 3.1.3]{3AMIGOS} for a more general definition). Here,  
$S_{n-a}$ is the subgroup of $S_n$ that permutes $\{a+1,\ldots,n\}$ and acts trivially on $\{1,2,\ldots,a\}$. The coinvariant quotient $(V_n)_{S_{n-a}}$ is the $S_a$-module $V_n\otimes_{\mathbb{C}[S_{n-a}]}\mathbb{C}$,  the largest quotient of $V_n$ on which $S_{n-a}$ acts trivially.\medskip


The finite generation properties of the FI-modules $H^i\big(\F(\mathbb{C},\bullet)\big)$ have already been  discussed  in \cite[Example 5.1.A]{3AMIGOS}.\medskip

\begin{prop}\label{CONF}
The FI-module  $H^i\big(\F(\mathbb{C}, \bullet)\big)$ is finitely generated with weight $\leq 2i$ and has stability degree $\leq 2i$ 
\end{prop}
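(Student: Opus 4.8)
The plan is to extract everything from Arnol'd's presentation in Theorem~\ref{ARNPRES}, which realizes $H^*\big(\F(\mathbb{C},n);\mathbb{C}\big)$ as a $\mathbb{C}$-algebra generated in degree $1$ by the classes $\omega_{i,j}$. The first step is to record the FI-action on these generators: the co-FI-space map attached to an injection $f\colon[m]\hookrightarrow[n]$ pulls $d\log(z_j-z_i)$ back to $d\log(z_{f(j)}-z_{f(i)})$, so on cohomology $f$ acts by $\omega_{i,j}\mapsto\omega_{f(i),f(j)}$ and therefore sends a degree-$i$ monomial $\omega_{a_1,b_1}\cdots\omega_{a_i,b_i}$ to $\omega_{f(a_1),f(b_1)}\cdots\omega_{f(a_i),f(b_i)}$, i.e.\ it relabels indices along $f$.

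For finite generation and the weight bound, note that because the algebra is generated in degree $1$ the space $H^i\big(\F(\mathbb{C},n);\mathbb{C}\big)$ is spanned by products of exactly $i$ of the $\omega_{a,b}$, which we may take to be pairwise distinct since each $\omega_{a,b}$ squares to zero. Such a monomial involves at most $2i$ of the indices $\{1,\ldots,n\}$; if $S\subseteq[n]$ is the set it touches, with $|S|=s\le 2i$, then the monomial is the image, under the FI-morphism $[s]\hookrightarrow[n]$ with image $S$, of the corresponding monomial in $H^i\big(\F(\mathbb{C},s);\mathbb{C}\big)$. Hence $H^i\big(\F(\mathbb{C},\bullet)\big)$ is generated in degree $\le 2i$, and since there are only finitely many monomials on $\le 2i$ points it is finitely generated. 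The weight bound then follows from \cite[Prop. 3.2.5]{3AMIGOS}, by which the degree of generation bounds the weight.

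For the stability degree I would analyze the coinvariant maps \eqref{MAPI} directly. With $S_{n-a}$ permuting $\{a+1,\ldots,n\}$, every spanning monomial of $H^i\big(\F(\mathbb{C},n)\big)$ touches at most $2i$ indices, so its image in $\big(H^i(\F(\mathbb{C},n))\big)_{S_{n-a}}$ depends only on the combinatorial pattern of how its $\le 2i$ edges meet the fixed set $\{1,\ldots,a\}$, and not on the specific free labels in $\{a+1,\ldots,n\}$. Once $n-a\ge 2i$ there are always enough free slots to realize every such pattern, and the standard inclusion $I_n$ merely re-indexes them. I then expect $(I_n)_*$ to be surjective—any target monomial uses at most $2i<(n+1)-a$ free indices, so modulo $S_{(n+1)-a}$ it can be pushed into $\{a+1,\ldots,n\}$—and injective, giving an isomorphism of $S_a$-modules for all $n\ge 2i+a$, hence stability degree $\le 2i$.

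The hard part will be the injectivity half of this last step. Because the monomials are not free but satisfy the Arnol'd relations, I must rule out that passing from $n$ to $n+1$ collapses $S_{n-a}$-coinvariant classes that were previously distinct. The saving feature is that all three relations—$\omega_{i,j}=\omega_{j,i}$, anticommutativity, and $\omega_{i,j}\omega_{j,k}+\omega_{j,k}\omega_{k,i}+\omega_{k,i}\omega_{i,j}=0$—are supported on at most three indices, so they are local. I would make this precise by fixing a monomial basis, e.g.\ the products $\omega_{a_1,b_1}\cdots\omega_{a_i,b_i}$ with $a_s<b_s$ and $b_1<\cdots<b_i$, computing the induced basis of $S_{n-a}$-coinvariant patterns, and checking that its cardinality no longer changes once $n-a=2i$; this stabilization is exactly what forces $(I_n)_*$ to be an isomorphism.
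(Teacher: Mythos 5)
Your treatment of finite generation and of the weight bound is correct and is essentially the paper's argument made explicit: from Theorem~\ref{ARNPRES} every class in $H^i\big(\F(\mathbb{C},n);\mathbb{C}\big)$ is a linear combination of products of $i$ generators $\omega_{a,b}$, each such monomial touches at most $2i$ indices and hence lies in the image of some $H^i\big(\F(\mathbb{C},s);\mathbb{C}\big)$ with $s\leq 2i$, and the weight bound then follows from the fact that the degree of generation bounds the weight. (The paper packages the same idea through \cite[Theorem 4.2.3]{3AMIGOS}, applied to the graded FI-algebra generated by the weight-$\leq 2$ piece $H^1$.)

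The stability degree is where there is a genuine gap. You correctly identify that injectivity of the coinvariant maps \eqref{MAPI} is the hard half, but the strategy you propose for it does not close the argument. The coinvariant space $\big(H^i(\F(\mathbb{C},n))\big)_{S_{n-a}}$ is \emph{spanned} by the images of your admissible (NBC-type) monomials, but that basis of $H^i(\F(\mathbb{C},n))$ is not permuted by $S_{n-a}$: applying a permutation to an admissible monomial generally produces a non-admissible one that must be rewritten via the three-term Arnol'd relations as a linear combination of admissible monomials. Consequently the number of ``coinvariant patterns'' of basis monomials is not the dimension of the coinvariant quotient --- the images of distinct patterns can be linearly dependent --- and showing that this count stabilizes does not force $(I_n)_*$ to be injective. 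To make your route work you would have to prove that the patterns remain linearly \emph{independent} in the coinvariants (a genuine computation with the relations), or find some other lower bound on the target dimension. The paper avoids this entirely: it invokes the fact, established in \cite{3AMIGOS}, that $H^i\big(\F(\mathbb{C},\bullet)\big)$ carries the richer structure of an FI$\#$-module (coming from the maps of configuration spaces that forget and add points), and for FI$\#$-modules the stability degree is automatically bounded by the weight (proof of \cite[Cor. 4.1.8]{3AMIGOS}). If you want a self-contained argument, the cleanest fix is to exhibit that FI$\#$-structure, or equivalently to show that $H^i\big(\F(\mathbb{C},\bullet)\big)$ is a direct sum of induced modules $M(W_m)$ with $m\leq 2i$, for which the coinvariant computation is trivial.
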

\begin{proof}
From Theorem \ref{ARNPRES} the graded FI-module $H^*\big(\F(\mathbb{C},\bullet)\big)$ is generated by the FI-module $H^1\big(\F(\mathbb{C},\bullet)\big)$ that has weight $\leq 2$. It follows by \cite[Theorem 4.2.3]{3AMIGOS} that $H^i\big(\F(\mathbb{C},\bullet)\big)$ is finitely generated with weight $\leq 2i$. Moreover, in \cite{3AMIGOS} it is shown that $H^i\big(\F(\mathbb{C},\bullet)\big)$ has the additional structure of what \cite{3AMIGOS} calls an FI$\#$-module, which implies that it has stability degree bounded above by the weight (see proof of \cite[Cor. 4.1.8]{3AMIGOS}).
\end{proof}\medskip

Finite generation for the FI-modules $H^i(\mathcal{M}_{0,\bullet+1})$ follows from Theorem \ref{MODULIPRES} and Proposition \ref{CONF}.

\begin{theo}\label{MODShift}
The FI-module $H^i(\mathcal{M}_{0,\bullet+1})$ is finitely generated  in degree $\leq 4i$, with weight $\leq 2i$ and has stability degree $\leq 2i$.
\end{theo}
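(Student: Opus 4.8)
The plan is to obtain the three assertions from two separate inputs: the graded--algebra presentation of Theorem~\ref{MODULIPRES} will yield the generation--degree and weight bounds, while the comparison with the configuration space in Proposition~\ref{ISO} will supply the stability degree, which the algebra structure alone does not control.

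First I would record the consequence of Proposition~\ref{ISO} in a fixed cohomological degree. Since $H^*(\mathbb{C}^*;\mathbb{C})$ is concentrated in degrees $0$ and $1$, each one--dimensional with trivial $S_n$--action, the graded FI-module isomorphism of Proposition~\ref{ISO} specializes in degree $i$ to an isomorphism of FI-modules
$$H^i\big(\F(\mathbb{C},\bullet)\big)\cong H^i(\mathcal{M}_{0,\bullet+1})\oplus H^{i-1}(\mathcal{M}_{0,\bullet+1}).$$
In particular $H^i(\mathcal{M}_{0,\bullet+1})$ is a direct summand, hence a sub-FI-module, of $H^i\big(\F(\mathbb{C},\bullet)\big)$. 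Since weight is closed under subquotients, the weight bound $\leq 2i$ from Proposition~\ref{CONF} is inherited. For the stability degree I would use that the coinvariant functor $(-)_{S_{n-a}}$ is additive, so the maps $(I_n)_*$ of \eqref{MAPI} respect the above direct--sum decomposition and are block--diagonal; as they are isomorphisms for $H^i\big(\F(\mathbb{C},\bullet)\big)$ in the range $n\geq 2i+a$, they are isomorphisms on each summand. Hence $H^i(\mathcal{M}_{0,\bullet+1})$ has stability degree $\leq 2i$.

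For finite generation in degree $\leq 4i$ I would instead use the algebra structure. By Theorem~\ref{MODULIPRES} the graded FI-module $H^*(\mathcal{M}_{0,\bullet+1})$ is generated, as a graded FI-algebra, in cohomological degree $1$ by $H^1(\mathcal{M}_{0,\bullet+1})$, which was shown above to be finitely generated as an FI-module in degree $\leq 4$. The iterated cup product furnishes a surjection of FI-modules from the $i$-fold pointwise tensor power of $H^1(\mathcal{M}_{0,\bullet+1})$ onto $H^i(\mathcal{M}_{0,\bullet+1})$; since a pointwise tensor product of FI-modules generated in degrees $\leq d_1$ and $\leq d_2$ is generated in degree $\leq d_1+d_2$, and quotients do not increase the generation degree, this is exactly the situation governed by \cite[Theorem 4.2.3]{3AMIGOS}. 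Applying it as in the proof of Proposition~\ref{CONF}, but now with generators in degree $\leq 4$, gives that $H^i(\mathcal{M}_{0,\bullet+1})$ is finitely generated in degree $\leq 4i$.

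The point that requires care is the interplay between the two numerical bounds. The generation degree $4i$ is strictly larger than the weight $2i$, so it cannot be read off from the inclusion into $H^i\big(\F(\mathbb{C},\bullet)\big)$---sub-FI-modules may require generators in higher FI-degree---and must be traced through the presentation, where the factor $4$ already appears at the level of $H^1$ because the class $\theta_{3,4}$ forces a generator in $H^1(\mathcal{M}_{0,5})$. Conversely, the stability degree is invisible to the algebra presentation: it is obtained for the configuration space in Proposition~\ref{CONF} from the FI$\#$-structure, which $H^i(\mathcal{M}_{0,\bullet+1})$ need not possess, so the direct--summand argument of the second paragraph is what makes that bound available. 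The main thing I expect to need to verify carefully is thus that stability degree passes to direct summands, for which additivity of coinvariants suffices.
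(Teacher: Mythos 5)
Your proof is correct and follows essentially the same route as the paper: the generation bound $\leq 4i$ comes from the degree-one generation of the cohomology ring in Theorem~\ref{MODULIPRES} combined with the tensor-product/quotient bound of \cite{3AMIGOS}, and the stability degree $\leq 2i$ is transferred from $H^i\big(\F(\mathbb{C},\bullet)\big)$ via the comparison of Proposition~\ref{ISO}. The only cosmetic differences are that the paper deduces the weight bound from the graded-algebra statement \cite[Cor.~4.2.A]{3AMIGOS} rather than from the direct-summand decomposition, and cites \cite[Lemma~3.1.6]{3AMIGOS} for the inheritance of stability degree instead of proving it by hand through additivity of coinvariants as you do.
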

\begin{proof}

By Theorem \ref{MODULIPRES} the graded FI-module $H^*(\mathcal{M}_{0,\bullet+1})$ is  generated by the FI-module $H^1(\mathcal{M}_{0,\bullet+1})$, which is finitely generated in degree $\leq 4$ and has weight $\leq 2$. It follows from  \cite[Proposition 2.3.6]{3AMIGOS} that the FI-module $H^i(\mathcal{M}_{0,\bullet+1})$ is finitely generated in degree $\leq 4i$. By \cite[Corolary 4.2.A]{3AMIGOS}  it has weight $\leq 2i$.
  
 Moreover,  from \cite[Lemma 3.1.6]{3AMIGOS} we have that the stability degree of $H^i(\mathcal{M}_{0,\bullet+1})$ is bounded above by the stability degree of $H^i\big(\F(\mathbb{C},\bullet) \big)$.
 \end{proof}\medskip

\noindent{\bf From $H^i(\mathcal{M}_{0,\bullet+1})$ to the FI-module $H^i(\mathcal{M}_{0,\bullet})$}. 
The relation between the degree of generation of an  FI-module $V$ and its ``shift'' $S_{+1}V$ was established in \cite[Cor. 2.13]{4AMIGOS}.  We can also relate the weights and stability degrees using the classical branching rule (see e.g. \cite{FULTON_HARRIS}). 
\begin{prop}\label{BRANCH}
Let $\lambda$ be a partition of $n+1$ and $V_\lambda$ the corresponding irreducible $S_{n+1}$-representation, then as $S_{n}$-representations we have the decomposition
$$\text{Res}_{S_{n}}^{S_n+1}V_\lambda\cong\bigoplus_\nu V_\nu$$
over those partitions $\nu$ of $n$ obtained from $\lambda$ by removing one box from one of the columns of the corresponding Young diagram. 
\end{prop}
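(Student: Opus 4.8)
The plan is to prove this as the classical branching rule, deducing the restriction from the dual induction computation by Frobenius reciprocity. First I would record that the embedding $J_n$ realizes $S_n$ as the stabilizer of the point $1$ inside $S_{n+1}$ (the generators $(i\ i+1)$ of $S_n$ map to the transpositions $(i+1\ i+2)$, which together permute $\{2,\ldots,n+1\}$ and fix $1$). Since any two point-stabilizers are conjugate in $S_{n+1}$, the isomorphism type of $\text{Res}^{S_{n+1}}_{S_n}V_\lambda$ is independent of which point is fixed and agrees with the restriction along the standard inclusion. Thus it suffices to establish the rule for a single convenient model of $S_n\subset S_{n+1}$.

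The key step is to compute, for each partition $\nu$ of $n$, the multiplicity of $V_\nu$ in the restriction. By Frobenius reciprocity this multiplicity equals
$$\dim\Hom_{S_n}\big(\text{Res}^{S_{n+1}}_{S_n}V_\lambda, V_\nu\big)=\dim\Hom_{S_{n+1}}\big(V_\lambda,\Ind^{S_{n+1}}_{S_n}V_\nu\big),$$
so the problem becomes identifying which irreducibles $V_\lambda$ occur in the induced representation $\Ind^{S_{n+1}}_{S_n}V_\nu$ and with what multiplicity. This is exactly the simplest instance of Pieri's rule: inducing from $S_n\cong S_n\times S_1$ up to $S_{n+1}$ corresponds, under the characteristic map, to multiplying the Schur function $s_\nu$ by $s_{(1)}$, and $s_\nu\cdot s_{(1)}=\sum_\lambda s_\lambda$, where the sum runs over all $\lambda$ obtained from $\nu$ by adding a single box in a position leaving a valid Young diagram, each occurring with multiplicity one.

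Combining these two facts, $V_\nu$ appears in $\text{Res}^{S_{n+1}}_{S_n}V_\lambda$ with multiplicity one exactly when $\lambda$ is obtained from $\nu$ by adding a box, equivalently when $\nu$ is obtained from $\lambda$ by deleting a single removable (corner) box, namely the bottom box of one of the columns of the Young diagram of $\lambda$, and with multiplicity zero otherwise. Summing over all such $\nu$ yields the claimed decomposition. I expect the main obstacle to be Pieri's rule itself, which is the genuine combinatorial input: it can be quoted directly from the symmetric-function dictionary between $S_n$-representations and Schur functions (as in Fulton--Harris), or proved independently via the Frobenius character formula and the expansion of $p_1\cdot s_\nu$. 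Since the statement is standard and is used here only as a tool, I would simply cite it; the remaining bookkeeping --- matching ``adding a box'' on the induction side with ``removing a box from a column'' on the restriction side --- is routine.
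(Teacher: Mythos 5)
Your argument is correct: Frobenius reciprocity together with Pieri's rule for multiplication by $s_{(1)}$ is the standard derivation of the branching rule, and your observation that the embedding $J_n$ is a point stabilizer, hence conjugate to the standard inclusion $S_n\subset S_{n+1}$, is the right way to reduce to the usual setup. The paper itself offers no proof of this proposition --- it simply quotes the classical branching rule from Fulton--Harris --- so your sketch is consistent with, and somewhat more detailed than, the paper's treatment.
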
 

\begin{theo}[Finite generation and ``shifted'' FI-modules] \label{SHIFT}Let $V$ be a finitely generated  FI-module generated in degree $\leq d$, then $S_{+1}V$ is finitely generated in degree $\leq d$. Conversely, if the FI-module $S_{+1}V$ is finitely generated in degree $\leq d$, then  $V$ is finitely generated in degree $\leq d+1$.

Furthermore, if $S_{+1}V$ has weight $\leq M$ and stability degree $\leq N$, then $V$  has weight $\leq M+1$ and stability degree $\leq N+1$. Conversely, if $V$ has weight $\leq M$ and stability degree $\leq N$, then $S_{+1}V$  has weight $\leq M$ and stability degree $\leq N$.
\end{theo}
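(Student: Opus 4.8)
The plan is to treat the three invariants separately, in each case reducing to the identity $(S_{+1}V)_n = V_{n+1}$, under which the $S_n$-action on the left is the restriction of the $S_{n+1}$-action on the right along the point-stabilizer $J_n(S_n)\subset S_{n+1}$; equivalently $(S_{+1}V)_n\cong\text{Res}_{S_n}^{S_{n+1}}V_{n+1}$. The statements about the degree of generation are precisely \cite[Cor.~2.13]{4AMIGOS}, so for the first paragraph I would simply invoke that result. For weight and stability degree I would argue by hand, using Proposition \ref{BRANCH} to control restrictions and a careful bookkeeping of the coinvariant functors to control the stability maps.

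For weight, I would use that an irreducible $S_n$-representation $V_\mu$ has weight $n-\mu_1$, where $\mu_1$ is the largest part of $\mu$. By Proposition \ref{BRANCH}, each irreducible constituent $V_\nu$ of $\text{Res}_{S_n}^{S_{n+1}}V_\lambda$ is obtained from $\lambda$ by deleting a single box: deleting a box from the first row leaves the weight $n+1-\lambda_1$ unchanged, while deleting a box from any lower row decreases it by exactly $1$. Hence restriction never increases weight, giving the converse direction ($V$ of weight $\leq M$ implies $S_{+1}V$ of weight $\leq M$). For the forward direction, any constituent $V_\lambda$ of $V_{n+1}$ of weight $w\geq 1$ has at least two rows, so deleting a box from its last row produces a constituent of $(S_{+1}V)_n$ of weight exactly $w-1$; since $S_{+1}V$ has weight $\leq M$ this forces $w\leq M+1$.

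For stability degree the key is the identification, for every $a\geq 0$ and $n\geq a$, of $S_a$-modules
$$\big((S_{+1}V)_n\big)_{S_{n-a}}\ \cong\ (V_{n+1})_{S_{(n+1)-(a+1)}},$$
obtained by tracing through $\Xi_1$: the subgroup $S_{n-a}\subset S_n$ permuting $\{a+1,\dots,n\}$ becomes, inside $S_{n+1}$, the group permuting $\{a+2,\dots,n+1\}$, which is exactly the subgroup defining $V$'s coinvariants at parameter $a+1$. Under this identification the stability map $(I_n)_*$ for $S_{+1}V$ at parameter $a$ coincides with the stability map $(I_{n+1})_*$ for $V$ at parameter $a+1$, so one is an isomorphism iff the other is. The converse direction is then immediate: if $V$ has stability degree $\leq N$, its map at parameter $a+1$ is an isomorphism whenever $n+1\geq N+(a+1)$, i.e. $n\geq N+a$, so $S_{+1}V$ has stability degree $\leq N$. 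The forward direction is likewise immediate for every parameter $b\geq 1$: from $S_{+1}V$ having stability degree $\leq N$ we get that $V$'s map at parameter $b$ is an isomorphism for all $m\geq N+b$, which is stronger than the range $m\geq (N+1)+b$ demanded.

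The main obstacle is the parameter $b=0$, i.e. the full-coinvariant map $(V_m)_{S_m}\to(V_{m+1})_{S_{m+1}}$, which is never in the image of the correspondence above and so must be handled directly. Surjectivity is a short diagram chase: in the commutative square relating the parameter-$0$ and parameter-$1$ maps through the natural surjections $(V_m)_{S_{m-1}}\twoheadrightarrow(V_m)_{S_m}$ and $(V_{m+1})_{S_m}\twoheadrightarrow(V_{m+1})_{S_{m+1}}$, the top (parameter-$1$) map is an isomorphism and both verticals are surjective, forcing the bottom map to be surjective. For injectivity I would pass to dimensions, where $\dim(V_m)_{S_m}$ equals the multiplicity $c_{\emptyset,m}$ of the trivial representation in $V_m$, and show this is eventually constant. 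Here the branching rule enters a second time: via the Pieri rule the multiplicity of the sign representation of $S_b$ in $(V_m)_{S_{m-b}}$ stabilizes to $c_{(1^{b-1}),m}+c_{(1^{b}),m}$, and the isomorphisms already established at parameters $b\geq 1$ say exactly that each such sum is eventually constant in $m$. Since the weight bound $\leq M+1$ forces $c_{(1^{b}),m}=0$ once $b>M+1$, a downward induction on $b$ along this chain of sums yields that each $c_{(1^{b}),m}$, and in particular $c_{\emptyset,m}$, is eventually constant; a surjection of vector spaces of equal finite dimension is an isomorphism. The delicate point I would still have to pin down is that all of these stabilizations already hold in the precise range $m\geq N+1$ demanded by the statement, rather than merely for $m$ sufficiently large.
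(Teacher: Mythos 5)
Your proposal follows the paper's proof almost step for step: the generation-degree claims are dispatched by citing \cite[Cor.~2.13]{4AMIGOS}, the weight claims by the branching rule of Proposition \ref{BRANCH} (your ``delete a box from the last row'' is just a concrete instance of the paper's observation that $V(\lambda)_n$ lifts to some $V(\mu)_{n+1}$ with $|\mu|\leq|\lambda|+1$), and the stability-degree claims by the identification $\big((S_{+1}V)_n\big)_{S_{n-a}}\cong (V_{n+1})_{S_{(n+1)-(a+1)}}$ obtained by tracing $\{a+1,\dots,n\}$ through $\Xi_1$. Where you diverge is instructive: the paper ends its stability-degree discussion with ``which implies the statement about stability degrees,'' but, as you correctly point out, the identification only matches the coinvariant maps of $S_{+1}V$ at parameter $a$ with those of $V$ at parameter $a+1\geq 1$, so in the direction actually used later (from $S_{+1}V$ to $V$, needed for Theorem \ref{MODn}) the parameter-$0$ map $(V_m)_{S_m}\to(V_{m+1})_{S_{m+1}}$ is genuinely not covered. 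Flagging this and attempting to close it is a real improvement over the paper's proof, which is silent on the point.

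That said, your treatment of the $a=0$ case is not complete, and the gap is exactly the one you name at the end. The surjectivity diagram chase is fine and lands in the correct range $m\geq N+1$. But for injectivity you only establish that $\dim(V_m)_{S_m}=c_{\emptyset,m}$ is \emph{eventually} constant, via the downward induction on the sums $c_{(1^{b-1}),m}+c_{(1^{b}),m}$; each step of that induction loses control of the range (the stabilization of $c_{(1^{b-1}),m}$ is only obtained for $m$ at least the max of the ranges of the two later steps, so after descending from $b=M+2$ down to $b=1$ you only get constancy for roughly $m\geq N+M+2$). Surjectivity for all $m\geq N+1$ together with eventual constancy of the dimension does \emph{not} force constancy on all of $[N+1,\infty)$: the dimensions could strictly drop once right after $N+1$ and then stabilize, in which case the first map would be a non-injective surjection. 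So as written the argument proves a stability degree bound of the form $\leq N+M+O(1)$ rather than $\leq N+1$, and the precise claim still needs either a range-preserving version of the induction or a different handling of the trivial isotypic component.
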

\begin{proof}
If $V$ has weight $\leq M$, then for all $n\geq 0$, the irreducible components $V(\mu)_{n+1}$ of $V_{n+1}$ have $|\mu|\leq M$. From Proposition \ref{BRANCH} it follows that $\text{Res}_{S_{n}}^{S_n+1}V(\mu)_{n+1}$ will be a direct sum of irreducibles $V(\lambda)_n$, with $|\lambda|\leq|\mu|\leq M$. Conversely, if $S_{+1}V$  has weight $\leq M$, then each irreducible component $V(\lambda)_n$ of $S_{+1}V_n$ has $|\lambda|\leq M$. By Proposition \ref{BRANCH}, it comes from the restriction of some $V(\mu)_{n+1}$ with $|\mu|\leq|\lambda|+1\leq M+1$.


On the other hand, the functor $\Xi_1$ sends $\{1,\ldots,a\}$ into $\{2,\dots,a+1\}$ and  $\{a+1,\ldots,n\}$ into $\{a+2,\dots,n+1\}$. Therefore, the inclusion $J_n:S_n\hookrightarrow S_{n+1}$ maps the subgroup $S_{n-a}$ of $S_n$ onto the subgroup $S_{(n+1)-(a+1)}$ of $S_{n+1}$ and we have that 
$$(V_{n+1})_{S_{(n+1)-(a+1)}}=V_{n+1} \otimes_{\mathbb{C}[S_{(n+1)-(a+1)}]}\mathbb{C}= S_{+1}(V)_n\otimes_{\mathbb{C}[S_{n-a}]}\mathbb{C}=\big(S_{+1}(V)_n\big)_{S_{n-a}},$$
which implies the statement about stability degrees.
\end{proof}\medskip

In \cite{JIM2} we proved finite generation for the FI-modules $H^i(\mathcal{M}_{g,\bullet})$ when $g\geq 2$.  The case when $g=0$ follows from Theorem \ref{SHIFT} and Theorem \ref{MODShift}.

\begin{theo}\label{MODn}
The FI-module $H^i(\mathcal{M}_{0,\bullet})$ is finitely generated with weight $\leq 2i+1$
and has stability degree $\leq 4i$.
\end{theo}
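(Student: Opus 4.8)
The plan is to derive Theorem~\ref{MODn} from Theorem~\ref{MODShift} by observing that $H^i(\mathcal{M}_{0,\bullet+1}) = S_{+1}\big(H^i(\mathcal{M}_{0,\bullet})\big)$, so that $H^i(\mathcal{M}_{0,\bullet})$ is exactly the FI-module $V$ whose shift $S_{+1}V$ has already been controlled. Thus I would simply feed the conclusions of Theorem~\ref{MODShift} into the ``converse'' half of Theorem~\ref{SHIFT}. Concretely, Theorem~\ref{MODShift} tells us that $S_{+1}\big(H^i(\mathcal{M}_{0,\bullet})\big) = H^i(\mathcal{M}_{0,\bullet+1})$ is finitely generated in degree $\leq 4i$, with weight $\leq 2i$ and stability degree $\leq 2i$.

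First I would address finite generation. Since $S_{+1}\big(H^i(\mathcal{M}_{0,\bullet})\big)$ is finitely generated (in degree $\leq 4i$), the converse statement in the first paragraph of Theorem~\ref{SHIFT} immediately yields that $H^i(\mathcal{M}_{0,\bullet})$ is finitely generated (in degree $\leq 4i+1$, though the degree of generation is not recorded in the final statement). Next I would read off the weight: by the converse weight statement in Theorem~\ref{SHIFT}, if $S_{+1}V$ has weight $\leq M$ then $V$ has weight $\leq M+1$; applying this with $M = 2i$ gives weight $\leq 2i+1$ for $H^i(\mathcal{M}_{0,\bullet})$, matching the claim.

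The stability degree is the one place the bookkeeping deserves a second look. The converse in Theorem~\ref{SHIFT} states that if $S_{+1}V$ has stability degree $\leq N$ then $V$ has stability degree $\leq N+1$. With $N = 2i$ this would give $\leq 2i+1$. However, the final statement asserts stability degree $\leq 4i$, which is a weaker (larger) bound and is therefore a fortiori implied for $i \geq 1$ by $2i+1 \leq 4i$; for $i = 0$ the FI-module $H^0(\mathcal{M}_{0,\bullet})$ is the trivial FI-module, whose stability degree is $0 = 4\cdot 0$, so the bound holds there as well. Thus the honest bound $2i+1$ from Theorem~\ref{SHIFT} already implies the stated $\leq 4i$, and I would either cite $2i+1 \leq 4i$ directly or note that the stated bound is the non-optimal but uniform form.

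The main obstacle here is not mathematical depth but making sure the shift identity $H^i(\mathcal{M}_{0,\bullet+1}) = S_{+1}\big(H^i(\mathcal{M}_{0,\bullet})\big)$ is invoked in the correct direction: Theorem~\ref{MODShift} controls the \emph{shifted} module, while Theorem~\ref{MODn} concerns the \emph{unshifted} module $H^i(\mathcal{M}_{0,\bullet})$, so every application of Theorem~\ref{SHIFT} must be its converse half. The only subtlety is the discrepancy between the sharp stability bound $2i+1$ that Theorem~\ref{SHIFT} produces and the coarser $4i$ stated in the theorem; I would reconcile these explicitly rather than leave the reader to wonder whether a sharper statement was intended.
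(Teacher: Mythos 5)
Your proposal is correct and follows exactly the route the paper takes: the paper's entire proof of Theorem~\ref{MODn} is the remark that it follows from Theorem~\ref{SHIFT} applied (in its converse direction) to Theorem~\ref{MODShift}. Your extra care in reconciling the sharper stability bound $2i+1$ with the stated $\leq 4i$, including the $i=0$ case via the triviality of $H^0(\mathcal{M}_{0,\bullet})$, is a useful clarification the paper omits.
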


\noindent{\bf The first cohomology group.} 
Recall that $H^1(\mathcal{M}_{0,\bullet+1})_n$ is generated by the classes $\theta_{i,j}=\omega_{i,j}-\omega_{1,2}$ and it is a subrepresentation of $H^1(\F(\mathbb{C},n))$ which has a basis given by the classes $\omega_{i,j}$.
In particular, notice that $\dim H^1(\mathcal{M}_{0,\bullet+1})_n=\dim H^1(\F(\mathbb{C},n))-1$. Moreover for $n\geq 4$, we have the decomposition
$$H^1(\F(\mathbb{C},n))=V(0)_n\oplus V(1)_n\oplus V(2)_n.$$
Then,  for $n\geq 4$ the $S_n$-representation $$H^1(\mathcal{M}_{0,\bullet+1})_n=V(1)_n\oplus V(2)_n\cong\text{Res}_{S_n}^{S_{n+1}} H^1(\mathcal{M}_{0,n+1}).$$ 
Proposition \ref{BRANCH} implies that   for $n\geq 4$, we have  that $H^1(\mathcal{M}_{0,n+1})=V(2)_{n+1}$ as a representation of $S_{n+1}$. 
Moreover, notice that  $H^1(\mathcal{M}_{0,n+1})$ is finitely generated by the classes $\theta_{1,3}$, $\theta_{2,3}$ and $\theta_{3,4}$ in $H^1(\mathcal{M}_{0,5})$ not only an an $S_n$-module, but also as an $S_{n+1}$-module. Therefore, the FI-module $H^1(\mathcal{M}_{0,\bullet})$ is finitely generated in degree$\leq 5$ and has weight $\leq 2$.

\section{The $S_n$-representations $H^i(\mathcal{M}_{0,n};\mathbb{C})$}

At this point we can apply the theory of FI-modules to the  finitely generated FI-modules $H^i(\mathcal{M}_{0,\bullet})$ and $H^i(\mathcal{M}_{0,\bullet+1})$ to obtain information about the corresponding sequences of $S_n$-representations and their characters. The following result is a direct consequence from \cite[Prop. 3.3.3 and Theorem 3.3.4]{3AMIGOS} and Theorems \ref{MODShift} and \ref{MODn}.

\begin{theo}\label{CHAR} Let $i\geq 0$. For  $n\geq 4i+2$,  the sequence $\big\{H^i(\mathcal{M}_{0,n}) \big\}$ of representations of $S_n$ and the sequence  $\big\{H^i(\mathcal{M}_{0,\bullet+1})_{n-1} \big\}$ of $S_{n-1}$-representations satisfy the following:

{\bf (a)} The decomposition into irreducibles of $H^i(\mathcal{M}_{0,n};\mathbb{C})$  and of $H^i(\mathcal{M}_{0,\bullet+1};\mathbb{C})_{n-1}$ stabilize in the sense of uniform representation stability (\cite{CHURCH_FARB}) with stable range $n\geq 4i+2$.


{\bf (b)} The length of $H^i(\mathcal{M}_{0,\bullet+1};\mathbb{C})_{n-1}$ is bounded above by $2i$ and the length of   $ H^i(\mathcal{M}_{0,n};\mathbb{C})$ is bounded above by $2i+1$.

{\bf (c)} The sequence of characters  of the representations $H^i(\mathcal{M}_{0,\bullet+1};\mathbb{C})_{n-1}$ and $H^i(\mathcal{M}_{0,n};\mathbb{C})$ are {\it eventually polynomial}, in the sense that there exist {\it character polynomials} $P_i(X_1,X_2,\ldots,X_r)$ and $Q_i(X_1,X_2,\ldots,X_s)$ in the cycle-counting functions $X_k(\sigma):=$(number of $k$-cycles in $\sigma$) such that for all $n\geq 4i+2$:

$$\chi_{H^i(\mathcal{M}_{0,\bullet+1};\mathbb{C})_{n-1}}(\sigma)=P_i(X_1,X_2,\ldots,X_r)(\sigma) \text{ \ \ for all \ \ }\sigma\in S_{n-1},\text{\ \ \ and} $$

$$\chi_{H^i(\mathcal{M}_{0,n};\mathbb{C})}(\sigma)=Q_i(X_1,X_2,\ldots,X_s)(\sigma) \text{ \ \ for all \ \ }\sigma\in S_n .$$  Moreover, the degree of $P_i$ is $\leq 2i$ and the degree of $Q_i$ is $\leq 2i+1$, where we take $\deg X_k=k$. In particular, $r\leq 2i$ and $s\leq 2i+1$. 
\medskip 

\end{theo}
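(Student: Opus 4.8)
The plan is to derive all three parts from the structure theory of finitely generated FI-modules in \cite{3AMIGOS}, fed by the invariants of Theorems \ref{MODShift} and \ref{MODn}, routing everything through the shifted module, whose invariants are smaller. Thus $\{H^i(\mathcal{M}_{0,n})\}$ is the FI-module $H^i(\mathcal{M}_{0,\bullet})$ (weight $\le 2i+1$, stability degree $\le 4i$) and $\{H^i(\mathcal{M}_{0,\bullet+1})_{n-1}\}$ is $S_{+1}H^i(\mathcal{M}_{0,\bullet})$ (weight $\le 2i$, stability degree $\le 2i$), with $H^i(\mathcal{M}_{0,\bullet+1})_{n-1}\cong\mathrm{Res}^{S_n}_{S_{n-1}}H^i(\mathcal{M}_{0,n})$. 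For part (a) I would first apply the uniform-representation-stability criterion \cite[Prop.~3.3.3]{3AMIGOS}---weight $\le d$ and stability degree $\le N$ force stability once the index exceeds a bound linear in $d,N$---to the shifted module with $(d,N)=(2i,2i)$; the extra leverage is that, being a direct summand of the FI$\#$-module $H^i(\F(\mathbb{C},\bullet))$ (Proposition \ref{ISO} and the proof of Proposition \ref{CONF}), it stabilizes sharply. I would then transfer to the $S_n$-indexed sequence by observing that, in the stable range, the branching relation (Proposition \ref{BRANCH}) between the decompositions of $H^i(\mathcal{M}_{0,n})$ and its restriction is unitriangular in $|\mu|$, hence invertible; together with Theorem \ref{SHIFT} this lifts stability of the restriction to stability of $\{H^i(\mathcal{M}_{0,n})\}$ at the cost of one step, producing the common threshold $n\ge 4i+2$ rather than the weaker $\approx 6i$ from a direct application to $H^i(\mathcal{M}_{0,\bullet})$.

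For part (c) I would invoke the character-polynomial theorem \cite[Thm.~3.3.4]{3AMIGOS}: a finitely generated FI-module has character eventually equal to a fixed polynomial in the cycle counts $X_k$, of weighted degree (with $\deg X_k=k$) at most the weight, with onset governed by the stability degree. Substituting the two weights gives $\deg P_i\le 2i$ and $\deg Q_i\le 2i+1$; since a monomial of weighted degree $\le w$ uses $X_k$ only for $k\le w$, the variable bounds $r\le 2i$ and $s\le 2i+1$ follow at once, and polynomiality begins in the same range $n\ge 4i+2$ as in part (a).

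Part (b) is the one place where the machinery is off by one. Weight $\le d$ gives only $\ell(V_n)\le d+1$, and equality would require an ``all-columns'' constituent $V(1^{d})_n$ (the unique $\lambda$ with $\ell(\lambda)=|\lambda|=d$); so the asserted $\ell\le d$ is equivalent to the absence of $V(1^{2i})$ from the shifted module. I would prove this by reducing, via the FI-module splitting $H^i(\F(\mathbb{C},\bullet))\cong H^i(\mathcal{M}_{0,\bullet+1})\oplus H^{i-1}(\mathcal{M}_{0,\bullet+1})$ of Proposition \ref{ISO}, to excluding the extremal hook $V(1^{2i})_n$ from the configuration space via the Arnol'd presentation (Theorem \ref{ARNPRES}); it cannot hide in the second summand, whose weight $\le 2i-2$ forbids $|\lambda|=2i$. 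For the non-shifted module I would use that, once the first part $n-|\mu|\ge 2$, restriction $S_n\downarrow S_{n-1}$ preserves the number of rows of each constituent (delete a box from the long first row), so $\ell(H^i(\mathcal{M}_{0,\bullet+1})_{n-1})=\ell(H^i(\mathcal{M}_{0,n}))$ in the stable range and the bound transfers.

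I expect two points to need genuine care rather than routine citation. First, threading the explicit constants of \cite[Prop.~3.3.3, Thm.~3.3.4]{3AMIGOS} through the shift functor so that \emph{both} the $S_n$- and $S_{n-1}$-indexed sequences inherit the single clean threshold $n\ge 4i+2$: this is the bookkeeping of the unitriangular branching transfer above, and getting the final $+1$ exactly right is the fiddly step. Second, the one-box sharpening in part (b)---certifying that the extremal column representation does not occur---which is the only input not already packaged in Theorems \ref{MODShift} and \ref{MODn}; this is where I would expect the real, if modest, work to lie.
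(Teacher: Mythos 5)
Your overall route is the paper's own: the paper's entire proof of Theorem \ref{CHAR} is the single sentence that it ``is a direct consequence'' of \cite[Prop.~3.3.3 and Theorem 3.3.4]{3AMIGOS} together with Theorems \ref{MODShift} and \ref{MODn}, which is exactly the plug-in of weight and stability degree you perform for parts (a) and (c). Your refinement for the threshold --- applying the criterion to the shifted module (weight $\le 2i$, stability degree $\le 2i$, hence stable for index $\ge 4i$) and transferring to the $S_n$-indexed sequence along the unitriangular branching relation of Proposition \ref{BRANCH}, rather than applying it directly to $H^i(\mathcal{M}_{0,\bullet})$ (which with weight $\le 2i+1$ and stability degree $\le 4i$ would only give $n\ge 6i+1$) --- is a genuine service: the paper does not explain why both sequences share the single range $n\ge 4i+2$, and your transfer argument is a correct way to obtain it.

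The one genuine gap is in part (b), and you have located it yourself but not closed it. Weight $\le 2i$ only yields length $\le 2i+1$ (the paper itself records ``if a finitely generated FI-module $V$ has weight $\leq d$ \ldots\ $\ell(V_n)\le d+1$''), and you correctly reduce the improvement to $\ell\le 2i$ to the non-occurrence of $V(1^{2i})_n$ in $H^i\big(\F(\mathbb{C},n)\big)$ --- but you never prove this; you only say you ``would'' do it via the Arnol'd presentation. As written, the proposal asserts precisely the one statement that is not packaged in Theorems \ref{MODShift} and \ref{MODn}. The claim is true and the missing step is short: in the FI$\#$-decomposition $H^i\big(\F(\mathbb{C},n)\big)\cong\bigoplus_{k\le 2i}\Ind_{S_k\times S_{n-k}}^{S_n}W_k\boxtimes\mathbb{C}$, Pieri's rule shows $V_{(n-2i,1^{2i})}$ could only come from the sign representation of $S_{2i}$ inside $W_{2i}$; but $W_{2i}$ is spanned by monomials $\omega_{a_1b_1}\cdots\omega_{a_ib_i}$ supported on perfect matchings of $[2i]$, each fixed by the transpositions $(a_j\,b_j)$ since $\omega_{ij}=\omega_{ji}$, so Frobenius reciprocity rules out the sign. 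Supplying this (or citing the known bound $\ell\big(H^i(\F(\mathbb{C},n))\big)\le i+1$) completes your argument; your subsequent transfer of the length bound to the unshifted sequence, using that restriction preserves the number of rows of each constituent in the stable range, is then fine.
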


If $e\in S_{n-1}$ is the identity element,  from Theorem \ref{CHAR}(c), we obtain that the dimensions 
$$\dim_{\mathbb{C}}\big(H^i(\mathcal{M}_{0,n};\mathbb{C})\big)=
\chi_{H^i(\mathcal{M}_{0,\bullet+1};\mathbb{C})_{n-1}}(e)=P_i(X_1(e),\ldots,X_r(e))=P_i(n-1,\ldots,0)$$
are polynomials in $n$ of degree $\leq 2i$. This agrees with the known  Poincar\'e polynomial of $\mathcal{M}_{0,n}$ (see\cite[Cor. 2.10]{KISIN-LEHRER} and also \cite[5.5(8)]{GETZLER}).

From Theorem \ref{MODn} and the definition of weight,  we recover the fact that the alternating representation does not appear in the cohomology of $\mathcal{M}_{0,n}$ (\cite[Prop. 2.16]{KISIN-LEHRER}).

Theorem \ref{CHAR}(a) implies that the dimensions of the vector spaces $H^i(\mathcal{M}_{0,n}/S_n;\mathbb{C})$ and $H^i(\mathcal{M}_{0,n+1}/S_n;\mathbb{C})$ are constant. For the sequence $\{\mathcal{M}_{0,n}/S_n\}$, this is actually a trivial consequence from the fact that $\mathcal{M}_{0,n}/S_n$ has the cohomology of a point as shown in \cite[Theorem 2.3]{KISIN-LEHRER}.\medskip

\noindent{\bf Recursive relation for characters}.
In \cite[Theorem 4.1]{GAIFFI}, Gaiffi obtained a recursive formula that connects the characters of the $S_n$-representations $H^*(\mathcal{M}_{0,\bullet+1})_n$
and $H^*(\mathcal{M}_{0,n})$ as follows
\begin{equation}\label{REC}
\chi_{H^i(\mathcal{M}_{0,\bullet+1})_n}=\chi_{H^i(\mathcal{M}_{0,n})}+\big(X_1-1\big)\cdot\chi_{H^{i-1}(\mathcal{M}_{0,n})}\text{\ \ \ \ for }n\geq 3.
\end{equation}

In particular,  we know that 
$\chi_{H^1(\mathcal{M}_{0,\bullet+1})_n}=\chi_{H^1(\F(\mathbb{C},n))}-1={X_1 \choose 2}+X_2-1$ when $n\geq 4$. Therefore, for $i=1$, the recursive formula (\ref{REC}) gives us the character polynomial of degree $2$

$$\chi_{H^1(\mathcal{M}_{0,n})}=\chi_{H^1(\mathcal{M}_{0,\bullet+1})_n} - \big(X_1-1\big)\cdot\chi_{H^{0}(\mathcal{M}_{0,n})}
={X_1 \choose 2}+X_2-X_1=\chi_{V(2)}$$

as expected since  $H^1(\mathcal{M}_{0,n})=V(2)_{n}$.

Furthermore, if $P_i$ and $Q_i$ are the character polynomials of $H^i(\mathcal{M}_{0,\bullet+1})_n$
and $H^i(\mathcal{M}_{0,n})$ from Theorem \ref{CHAR} (c) for $n\geq 4i+2$, then formula (\ref{REC}) can be written as
$Q_i=P_i-(X_1-1)\cdot Q_{i-1}$ and $\deg Q_i\leq max\big(\deg P_i, 1+\deg Q_{i-1}\big)\leq 2i $. As a consequence of this and Theorem \ref{CHAR} (c) we have that,  for $n\geq 4i+2$, the values of $\chi_{H^i(\mathcal{M}_{0,\bullet+1};\mathbb{C})_{n}}(\sigma)$ and  $\chi_{H^i(\mathcal{M}_{0,n};\mathbb{C})}(\sigma)$ depend only on ``short cycles'', i.e. cycles on $\sigma$ of  length $\leq 2i$.
\medskip

\noindent{\bf More is known about the $S_n$-representations.} In this paper we were mainly interested in highlighting the methods, since 
more precise information about the characters of the $S_n$-representations is known. 
The  moduli space $\mathcal{M}_{0,n}$ can be represented by a finite type $\mathbb{Z}$-scheme and the manifold $\mathcal{M}_{0,n}(\mathbb{C})$  of $\mathbb{C}$-points of this scheme  corresponds to the definition in Section \ref{INTRO}. In \cite{KISIN-LEHRER} Kisin and Lehrer used an equivariant comparison theorem in $\ell$-adic cohomology  and the Grothendieck-Lefschetz's fixed point formula to obtain explicit descriptions of the graded character of the $S_n$-action on the cohomology of $\mathcal{M}_{0,n}(\mathbb{C})$ via counts of number of points of varieties over finite fields.
With their techniques they obtain the Poincar\'e polynomial of a permutation in $S_n$ of a specific cycle type acting on $H^*(\mathcal{M}_{0,n};\mathbb{C})$ (\cite[Theorem 2.9]{KISIN-LEHRER}) and a description of the top cohomology $H^{n-3}(\mathcal{M}_{0,n};\mathbb{C})$ \cite[Proposition 2.18]{KISIN-LEHRER}.
 Furthermore, Getzler uses the language of operads in \cite{GETZLER}  to obtain formulas for the characters of the $S_n$-modules $H^i(\mathcal{M}_{0,n};\mathbb{C})$.\medskip


\noindent{\bf The cohomology of $\overline{\mathcal{M}}_{0,n}$.}
A related space of interest is $\overline{\mathcal{M}}_{0,n}$, the {\it Deligne-Mumford compactification} of $\mathcal{M}_{0,n}$. It is a {\it fine moduli space} for 
stable $n$-pointed rational curves for $n\geq 3$ (see \cite[Chapter 1]{QUANTUM} and reference therein). 
It can also be constructed from $M(d\mathcal{A}_{n-1})$ using the theory of wonderful models of hyperplanes arrangements developed by De Concini and Procesi (see for example  \cite[Chapter 2]{GAIFFI3}).  The space $\overline{\mathcal{M}}_{0,n}$  also carries a natural action of the symmetric group $S_n$. Hence, a natural question to ask is whether the FI-module theory could tell us something about its cohomology groups as  $S_n$-representations.

 Explicit  presentations of the cohomology ring of the manifold of complex points $\overline{\mathcal{M}}_{0,n}(\mathbb{C})$  have been obtained by Keel  \cite{KEEL} and Yuzvinsky \cite{YUV}. Moreover,  several recursive and generating formulas for the  Poincar\'e polynomials have been computed  (for instance see \cite{YUV},\cite{GETZLER}, \cite{MANIN}, \cite{GAIFFICALL}). 
The sequence  $H^i(\overline{\mathcal{M}}_{0,n}(\mathbb{C});\mathbb{C})$ has the structure of an FI-module, however,  the Betti numbers of $\overline{\mathcal{M}}_{0,n}(\mathbb{C})$ grow exponentially in $n$,   
which precludes finite generation. Therefore an analogue of Theorem \ref{CHAR} cannot be obtained for this space.

On the other hand, as observed  in \cite{ETINGOF-ETAL}, the manifold $\overline{\mathcal{M}}_{0,n}(\mathbb{R})$ of real points of $\overline{\mathcal{M}}_{0,n}$ is topologically  similar to $\F(\mathbb{C}, n-1)$, the configuration space   of $n-1$ ordered points in $\mathbb{C}$, in the sense that   both are $K(\pi,1)$-spaces, have Poincar\'e polynomials with a simple factorization and Betti numbers that grow polynomially in $n$.  The cohomology ring of the real locus $\overline{\mathcal{M}}_{0,n}(\mathbb{R})$ was  completely determined in \cite{ETINGOF-ETAL} and an explicit formula for the graded character of the $S_n$-action was obtained in  \cite{RAINS}. The presentation  of the cohomology ring given in \cite{ETINGOF-ETAL} can be used to prove finite generation for the FI-modules $H^i(\overline{\mathcal{M}}_{0,n}(\mathbb{R});\mathbb{C})$ and to obtain an analogue of Theorem \ref{CHAR} for this space (see \cite{JIMAYA}).\bigskip

\begin{acknowledgement}
I would like to thank to Alex Suciu for pointing out a  relevant reference and Jennifer Wilson for useful comments. I am grateful to the Department of Mathematics at Northeastern University for providing such appropriate working conditions that allowed the completion of this paper.
\end{acknowledgement}

\bibliographystyle{plain}
\bibliography{referFI}

\end{document}